\def\MRbibitem{\@ifnextchar[\my@lbibitem\my@bibitem}
\def\mybiblabel#1#2{\@biblabel{{\hyperref{http://www.ams.org/mathscinet-getitem?mr=#1}{}{}{#2}}}}
\def\myhyperanchor#1{\Hy@raisedlink{\hyper@anchorstart{cite.#1}\hyper@anchorend}}
\def\my@lbibitem[#1]#2#3#4\par{%
  \item[\mybiblabel{#2}{#1}\myhyperanchor{#3}\hfill]#4%
  \@ifundefined{ifbackrefparscan}{}{\BR@backref{#3}}%
  \if@filesw{\let\protect\noexpand\immediate
    \write\@auxout{\string\bibcite{#3}{#1}}}\fi\ignorespaces%
}
\def\my@bibitem#1#2#3\par{%
  \refstepcounter\@listctr
  \item[\mybiblabel{#1}{\the\value\@listctr}\myhyperanchor{#2}\hfill]#3
  \@ifundefined{ifbackrefparscan}{}{\BR@backref{#2}}%
  \if@filesw\immediate\write\@auxout
    {\string\bibcite{#2}{\the\value\@listctr}}\fi\ignorespaces%
}
\declaretheorem[numberwithin=section]{theorem}
\declaretheorem[sibling=theorem]{proposition}
\declaretheorem[name=Acknowledgement, style=definition, numbered=no]{ack}
\numberwithin{equation}{section}     
\newcommand{\biangle}[1]{\mathopen{\langle {\kern -1.5pt} \langle}{#1} \mathclose{\rangle {\kern -1.5pt} \rangle}}
\newcommand{\cE}{\mathcal{E}}\newcommand{\cF}{\mathcal{F}}
\newcommand{\cM}{\mathcal{M}}
\newcommand{\cS}{\mathcal{S}}
\newcommand{\cW}{\mathcal{W}}
\newcommand{\C}{\mathbb{C}}
\newcommand{\R}{\mathbb{R}}
\newcommand{\Z}{\mathbb{Z}}
\newcommand{\N}{\mathbb{N}}
\renewcommand{\P}{\mathbb{P}}
\newcommand{\GL}{\mathrm{GL}}
\newcommand{\SL}{\mathrm{SL}}
\newcommand{\Mat}{\mathrm{Mat}}
\newcommand{\st}{\;\mathord{;}\;}
\DeclareMathOperator{\supp}{supp}
\newcommand*\circled[1]{\tikz[baseline=(char.base)]{
    \node[shape=circle,draw,inner sep=1pt] (char) {\footnotesize{#1}};}}
\newcommand{\dd}{\, \mathrm{d}}   
\DeclareMathOperator{\midpoint}{mid}
\newcommand{\conv}{\operatorname{\mathrm{co}}}
\newcommand{\ext}{\operatorname{\mathrm{ext}}}
\newcommand{\s}{\mathbf{s}} 
\newcommand{\fa}{\mathfrak{a}}
\newcommand{\arxiv}[1]{\href{http://arxiv.org/abs/#1}{arXiv:{#1}}}
\renewcommand{\epsilon}{\varepsilon}
\renewcommand{\phi}{\varphi}
\renewcommand{\emptyset}{\varnothing}
\newcommand{\subjclass}[2][1991]{%
  \let\@oldtitle\@title%
  \gdef\@title{\@oldtitle\footnotetext{#1 \emph{Mathematics subject classification.} #2}}%
}
\newcommand{\keywords}[1]{%
  \let\@@oldtitle\@title%
  \gdef\@title{\@@oldtitle\footnotetext{\emph{Key words and phrases.} #1.}}%
}
\newcommand{\ackno}[1]{%
  \let\@@oldtitle\@title%
  \gdef\@title{\@@oldtitle\footnotetext{\emph{Acknowledgements.} #1.}}%
}
\begin{document}

\title{Ergodic optimization of Birkhoff averages and Lyapunov exponents}
\date{February 2, 2017}
\author{Jairo Bochi \\ \small Facultad de Matem\'aticas, Pontificia Universidad Cat\'olica de Chile}

\subjclass[2010]{37A99; 37D30, 37E45, 37J50, 37H15, 90C05}

\ackno{Partially supported by project Fondecyt 1140202}

\maketitle

\addcontentsline{toc}{section}{Introduction}

In this paper $(X,T)$ denotes a topological dynamical system, that is, $X$ is a compact metric space and $T \colon X \to X$ is a continuous map.
Often we will impose additional conditions, but broadly speaking the dynamics that interest us the most are those that are sufficiently ``chaotic'', and in particular have many invariant probability measures.

Our subject is \emph{ergodic optimization} in a broad sense, meaning the study of extremal values of asymptotic dynamical quantities, and of the orbits or invariant measures that attain them. 
More concretely, we will discuss the following topics:
\begin{enumerate}
\item 
maximization or minimization of the ergodic averages of a real-valued function; 
\item 
optimization of the ergodic averages of a vectorial function, meaning that we are interested in the extrema of the ergodic averages of a function taking values in some euclidian space $\R^d$; 
\item 
maximization or minimization of the top Lyapunov exponent of a linear cocycle over $(X,T)$, or more generally, of the asymptotic average of a subadditive sequence of functions; 
\item
optimization of the whole vector of Lyapunov exponents of a linear cocycle.
\end{enumerate}
Unsurprisingly, many basic results and natural questions that arise in these topics are parallel. 
The aim of this paper is to provide an unified point of view, hoping that it will attract more attention to the many open problems and potential applications of the subject.
The setting should also be convenient for the study of problems where one cares about classes of invariant measures that are not necessarily optimizing. 

Disclaimer: This mandatorily short article is not a survey.
We will not try to catalog the large corpus of papers that fit into the subject of ergodic optimization.
We will neither provide a historical perspective of the development of these ideas, nor explore connections with fields such as Lagrangian Mechanics, Thermodynamical Formalism, Multifractal Analysis, and Control Theory.\footnote{I recommend Jenkinson's new survey \cite{Jenk_new}, which appeared shortly after the conclusion of this paper.}

\section{Optimization of Birkhoff averages} \label{s.Birkhoff_one_dim}

We denote by $\cM_T$ the set of $T$-invariant probability measures, which is a nonempty convex set and is compact with respect to the weak-star topology. Also let $\cE_T \subseteq \cM_T$ be the subset formed by ergodic measures, which are exactly the extremal points of $\cM_T$.

Let $f \colon X \to \R$ be a continuous function.
We use the following notation for Birkhoff sums:
$$
f^{(n)} \coloneqq f + f\circ T + \cdots +  f\circ T^{n-1} \, .
$$
By Birkhoff theorem, for every $\mu \in \cM_T$ and $\mu$-almost every $x \in X$, the asymptotic average $\lim_{n \to \infty }\tfrac{1}{n} f^{(n)}(x)$ is well-defined. The infimum and the supremum of all those averages will be denoted by $\alpha(f)$ and $\beta(f)$, respectively; we call these numbers the \emph{minimal} and \emph{maximal ergodic averages} of $f$.
Since $\alpha(f) = - \beta(-f)$, let us focus the discussion on the quantity $\beta$.
It can also be characterized as:
\begin{equation}\label{e.integral_beta}
\beta(f) = \sup_{\mu \in \cM_T} \int f \dd\mu  \, . 
\end{equation}
Compactness of $\cM_T$ implies the following \emph{attainability property}:
there exists at least one measure $\mu \in \cM_T$ for which $\int f \dd \mu = \beta(f)$;
such measures will be called \emph{maximizing measures}.

Another characterization is given by the following \emph{enveloping property}:
\begin{equation}\label{e.envelope_beta}
\beta(f) =
\inf_{n \ge 1} \frac{1}{n} \sup_{x \in X} f^{(n)}(x) \, , \quad \text{and the $\inf$ is also a $\lim$.}
\end{equation}
Actually, upper semicontinuity of $f$ suffices for these characterizations: see \cite{Jenk.survey}.

Recall that a function of the form $h \circ T - h$ with $h$ continuous is called a \emph{coboundary} (or \emph{$C^0$-coboundary}).
Two functions that differ by a coboundary are called \emph{cohomologous}, and have the same maximal ergodic average $\beta$.
Actually, $\beta(f)$ can be characterized as a minimax over the cohomology class of $f$:
\begin{equation}\label{e.dual}
\beta(f) = \inf_{h \in C^0(X)} \sup_{x \in X} (f + h \circ T - h) \, .
\end{equation}
Following the terminology in linear programming, this is called the \emph{dual characterization} of $\beta(f)$: see \cite{Radu}.
Formula \eqref{e.dual} was discovered independently several times; it is Lemma~1.3 in the paper \cite{FuKi}, where it is proved using Hahn--Banach theorem. Let us reproduce the more direct proof from \cite{Conze-G}.
The finite-time average $\tfrac{1}{n}f^{(n)}$ is cohomologous to $f$; indeed it equals $f + h \circ T - h$ for $h \coloneqq \tfrac{1}{n}\sum_{i=1}^n f^{(i)}$. Using the enveloping property \eqref{e.envelope_beta} we obtain the $\ge$ inequality in the dual formula \eqref{e.dual}. The reverse inequality is trivial.

Given $f$, a natural question arises: is the infimum in \eqref{e.dual} attained?
Well, if $h \in C^0(X)$ attains the infimum, then the inequality $f + h \circ T - h \le \beta(f)$ holds everywhere on $X$. Consider the closed set $K$ where equality holds.
This set is nonempty; indeed by integrating the inequality we see that a measure $\mu \in \cM_T$ is maximizing if and only if
$\mu(K)=1$, that is, if $\supp \mu \subseteq K$.
Any closed set with this property is called a \emph{maximizing set}.
So another question is whether the existence of such sets is guaranteed. 
Let us postpone answers a bit.

Every continuous function $f$ can be seen as a linear functional on the vector space of signed Borel measures on $X$, and conversely. 
The quantity $\beta(f)$ is the maximum that this linear functional attains in the compact convex set $\cM_T$, and so its computation is a problem of infinite-dimensional linear programming. 

Since every ergodic $\mu$ is an extremal point of $\cM_T$, it is maximizing for some $f$. Furthermore, since $\cM_T$ is a simplex (by uniqueness of ergodic decompositions), 
every ergodic $\mu$ is the unique maximizing measure for at least one function $f \in C^0(X)$ (see \cite{Jenk.unique} for the precise arguments). Conversely, if $f$ has a unique maximizing measure $\mu$ then $\mu$ must be ergodic (because the ergodic decomposition of a maximizing measure is formed by maximizing measures).

Uniqueness of the maximizing measure is a \emph{(topologically) generic} property, i.e., it holds for every function in a dense $G_\delta$ subset of $C^0(X)$.
Furthermore, the same is true if $C^0(X)$ is replaced by any Baire vector space $\cF$ of functions that embeds continuously and densely in $C^0(X)$: see \cite[Thrm.~3.2]{Jenk.survey}. 

The properties of maximizing measures of generic functions in $\cF$ may be very different according to the space under consideration. Consider $\cF = C^0(X)$ first. Suppose that $T$ is sufficiently hyperbolic (more precisely, $T$ satisfies Bowen's specification property); to avoid trivialities also assume that $X$ is a perfect set. Then, as shown by Morris \cite{Morris.Baire}, the unique maximizing measure of a generic function $f \in C^0(X)$ satisfies any chosen generic property in the space of measures $\cM_T(X)$; in particular maximizing measures  generically have zero entropy and full support. 
Note that if a function $f$ admits a maximizing set $K$
and has a maximizing measure of full support, then necessarily $K=X$ and therefore all probability measures have the same integral. 
Since the latter property is obviously non-generic (our $T$'s are not uniquely ergodic), we conclude that generic continuous functions $f$ admit \emph{no} maximizing set, and in particular the infimum in the dual formula \eqref{e.dual} is \emph{not} attained.

The situation is radically different for more regular functions. 
A central result in ergodic optimization, found in different forms via various methods by many authors, roughly states that if the dynamics $T$ is sufficiently hyperbolic and the function $f$ is sufficiently regular, then the infimum in the dual formula \eqref{e.dual} is attained.
Such results are called \emph{non-positive Livsic theorems}, \emph{Ma\~n\'e--Conze--Guivarch lemmas}, or \emph{Ma\~n\'e lemmas} for short.
One of the simplest versions is this: if $T \colon X \to X$ is a one-sided subshift of finite type, and $f$ is a $\theta$-H\"older function (assuming $X$ metrized in the usual way), then there exists a $\theta$-H\"older function $h$ such that 
\begin{equation}\label{e.Mane}
f + h \circ T - h \le \beta(f) \, ,
\end{equation}
Similar statements also hold for uniformly expanding maps, Anosov diffeomorphisms and flows, etc. 
Some references are \cite{Conze-G,Savchenko,CLT,Bousch.Walters,Lopes-T,Pol-Sharp,Bousch.amphi,Garibaldi.book}.
The methods of proof are also diverse: some proofs use Thermodynamical Formalism, some use fixed point theorems, and some use bare hands. 
A function $h$ solving the cohomological inequality \eqref{e.Mane} is called a \emph{subaction}.
As negative result, it is shown in \cite{Bousch-Jenk} that the regularity of the subaction $h$ is not always as good as $f$: it may be impossible to find a $C^1$ subaction $h$ even if $T$ and $f$ are~$C^\omega$.
The study of subactions forms a subject by itself: see \cite{Garibaldi.book}.

So it is natural to focus the study on regular functions $f$ and hyperbolic dynamics $T$,
for which the theory is richer.
Yuan and Hunt \cite{Yuan-Hunt} showed that only measures $\mu$ supported on periodic orbits can have the \emph{locking property}, which means that $\mu$ is the unique maximizing measure for some $f$ and also for functions sufficiently close to $f$.
Much more recently, Contreras \cite{Contreras} settled a main open problem and proved that maximizing measures are generically supported on periodic orbits. More precisely, he proved that if $T$ is a expanding map then a generic Lipschitz function has a unique maximizing measure, which is supported on a periodic orbit and has the locking property. 

Contreras' theorem provides some confirmation of the experimental findings of Hunt and Ott published two decades before \cite{Hunt-Ott}. They basically conjectured that for typical chaotic systems $T$ and typical smooth functions $f$, the maximizing measure is supported on a periodic orbit. However, their concept of typicality was a probabilistic one: Hunt and Ott actually conjectured that for typical parameterized families of functions, the Lebesgue measure of the parameters corresponding to maximizing orbits of period $p$ or greater is exponentially small in terms of $p$. This type of conjecture remains open. 

A conceptually clean probabilistic notion of typicality in function spaces was introduced in \cite{HSY} (basically rediscovering \cite{Christ}); it is called \emph{prevalence}. See \cite{Hunt-Kal} for several examples of prevalent properties (not all of them topologically generic) in Dynamical Systems. 
Zhang and the author \cite{B-Zhang} have obtained the following result in the direction of Hunt--Ott conjectures: if $T$ is the one-sided shift on two symbols, and $\cF$ is a space of functions with a very strong modulus of regularity, then every $f$ in a prevalent subset of $\cF$ has a unique maximizing measure, which is supported on a periodic orbit and has the locking property. 
Furthermore, we have obtained a sufficient condition for periodicity in terms of the wavelet coefficients of $f$. There is experimental evidence that this condition is prevalent not only in the space $\cF$ but also on bigger spaces of H\"older functions, but a proof is still missing.

For full shifts (and for other sufficiently hyperbolic dynamics as well), the set of measures supported on periodic orbits is dense in $\cM_T$.
In particular, $\cM_T$ is a \emph{Poulsen simplex}: its set $\cE_T$ of extremal points is dense. 
It seems fanciful to try to form a mental image of such an object, but let us try anyway. 
There are natural ways (see \cite{B-Zhang}) to approximate the Poulsen simplex by a nested sequence $R_1 \subset R_2 \subset \cdots \subset \cM_T$ of (finite-dimensional) polyhedra whose vertices are measures supported on periodic orbits.
Moreover, each polyhedron $R_n$ is a projection of the next one $R_{n+1}$, and the whole simplex $\cM_T$ is the inverse limit of the sequence.
These polyhedra are not simplices: on the contrary, they have a huge number of vertices and their faces are small.
Moreover, the polyhedra are increasingly non-round: the height of $R_{n+1}$ with respect to $R_n$ is small. 
In particular, each interior point of $R_n$ can be well-approximated by a vertex of some $R_m$ with $m > n$; this resembles the Poulsen property.
Furthermore, among the vertices of $R_n$, only a few of them are ``pointy'', and the others are ``blunt''; these pointy vertices are the measures supported on orbits of low period. If we take at random a linear functional on the finite-dimensional span of $R_n$, then the vertex of $R_n$ that attains the maximum is probably a pointy vertex. 
This is a speculative justification for the Hunt--Ott conjectures. 

As mentioned before, not every $\mu \in \cE_T$ can appear as a unique maximizing measure of a \emph{regular} function. So it is natural to ask exactly which measures can appear once the regularity class is prescribed. 
Motivated by a class of examples that we will explain in the next section, 
Jenkinson formulated the following fascinating question \cite[Probl.~3.12]{Jenk.survey}: 
if $T$ is the doubling map on the circle and $f$ is an analytic function with a unique maximizing measure $\mu$, can $\mu$ have positive entropy?

\section{Optimization of vectorial Birkhoff averages} \label{s.Birkhoff_any_dim}

Now consider a continuous vectorial function $f \colon X \to \R^d$. The \emph{rotation set} of $f$ is the set $R(f)$ of all averages $\int f \dd \mu$ where $\mu \in \cM_T$.
This is a compact convex subset of $\R^d$.
Furthermore, by ergodic decomposition, it equals the convex hull of the averages of $f$ with respect to ergodic measures; in symbols:
\begin{equation}\label{e.rot_set_erg}
R(f)
= \conv \left\{ \int f \dd\mu \st \mu \in \cE_T \right\} \, . 
\end{equation}
If $d = 1$ then $R(f) = [\alpha(f), \beta(f)]$, using the notation of the previous \lcnamecref{s.Birkhoff_one_dim}.
The prime type of examples of rotation set, which justifies the terminology, are those when $f$ equals the displacement vector of a map of the $d$-torus homotopic to the identity. Other examples of rotation sets, where the dynamics is actually a (geodesic) flow, are Schwartzman balls \cite{Paternain,Pol-Sharp}.

The measures $\mu \in \cM_T$ for which $\int f \dd\mu$ is an extremal point of $R(f)$ are called \emph{extremal measures}.
Of course, each of these measures is also a maximizing measure for a real-valued function $\langle c, f(\mathord{\cdot}) \rangle$, for some nonzero vector $c \in \R^d$, so we can use tools from one-dimensional ergodic optimization.

Let us describe a very important example that appeared in many of the early results in ergodic optimization \cite{Conze-G,Hunt-Ott,Jenk.thesis,Jenk.fish,Bousch.poisson}.
Let $T(z) \coloneq z^2$ be the doubling map on the unit circle, and let $f \colon S^1 \to \C$ be the inclusion function. The associated rotation set $R(f) \subset \C = \R^2$ is called the \emph{fish}.
Confirming previous observations from other researchers, Bousch \cite{Bousch.poisson} proved that the extremal measures are exactly the Sturmian measures. These measures form a family $\mu_\rho$ parametrized by rotation number $\rho \in \R/\Z$. If $\rho = p/q$ is rational then $\mu_\rho$ is supported on a periodic orbit of period $q$, while if $\rho$ is irrational then  $\mu_\rho$ is supported on an extremely thin Cantor set (of zero Hausdorff dimension, in particular) where $T$ is semiconjugated to an irrational rotation. 
In particular, Jenkinson's problem stated at the end of \cref{s.Birkhoff_any_dim} has a positive answer if $f$ is a trigonometric polynomial of degree $1$: Sturmian measures not only have zero entropy (i.e., subexponential complexity), but in fact they have linear complexity.

Let us come back to arbitrary $T$ and $f$.
Analogously to \eqref{e.envelope_beta}, we have the following \emph{enveloping property}:
\begin{equation}\label{e.envelope_any_dim}
R(f) = \bigcap_{n \ge 1} \frac{1}{n} \conv \big( f^{(n)}(X) \big) 
\, ,
\end{equation}
and the intersection is also a limit (in both the set-theoretic and the Hausdorff senses).
Using the same trick as in the proof of \eqref{e.dual}, it follows that for every neighborhood $U$ of $R(f)$ there exists $g$ cohomologous to $f$ whose image is contained in $U$.  

The following question arises: when can we find $g$ cohomologous to $f$ taking values in $R(f)$?
As we have already learned in the previous \lcnamecref{s.Birkhoff_one_dim}, to hope for this to be true we need at least some  hyperbolicity and regularity assumptions.
If moreover $d=1$ then the answer of the question becomes positive:
Bousch \cite{Bousch.bilateral} showed that whenever $T$ and $f$ satisfy the assumptions of a Ma\~n\'e Lemma, there exists $g$ cohomologous to $f$ taking values in $R(f)$, i.e.\ such that $\alpha(f) \le g \le \beta(f)$.
What about $d \ge 2$?
Unfortunately the answer is negative. 
The following observation was found by Vincent Delecroix and the author:

\begin{proposition}\label{p.Mane_counterexample}
Let $(X,T)$ and $f$ be as in the definition of the fish.
There exists no $g$ $C^0$-cohomologous to $f$ taking values in the fish.
\end{proposition}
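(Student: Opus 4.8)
The plan is to assume, for contradiction, that $g = f + h\circ T - h$ with $h\in C^0(S^1)$ and $g(S^1)\subseteq R(f)$, and then force a contradiction by evaluating $g$ at a sequence of points converging to the fixed point $1$. The only non-elementary input will be Bousch's theorem quoted above, that the extremal measures of the fish are exactly the Sturmian measures. Throughout I identify $S^1$ with $\R/\Z$ via $t\mapsto e^{2\pi i t}$.

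First I would record the elementary convexity fact: if $e$ is an extreme point of a convex set $C$ and $e=\tfrac1n(c_1+\cdots+c_n)$ with all $c_i\in C$, then $c_1=\cdots=c_n=e$. I apply it to periodic orbits. For $n\ge 2$, let $\mu_n$ be the Sturmian measure of rotation number $1/n$; a direct computation (its symbolic coding under the binary partition is $\overline{0^{n-1}1}$) identifies $\mu_n$ as the uniform measure on the periodic orbit $O_n=\{\,2^k/(2^n-1) : 0\le k\le n-1\,\}$, so $\int f\dd\mu_n=\tfrac1n\sum_{y\in O_n} f(y)$. Since $h\circ T-h$ sums to zero around any periodic orbit, also $\int f\dd\mu_n=\tfrac1n\sum_{y\in O_n} g(y)$. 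By Bousch's theorem $\int f\dd\mu_n\in\ext R(f)$, and each $g(y)\in R(f)$; hence the convexity fact forces $g(y)=\int f\dd\mu_n$ for every $y\in O_n$.

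Next I would let $n\to\infty$. Fix $m\ge 1$. For $n\ge m$, the orbit $O_n$ contains the point $w_{n,m}:=e^{2\pi i\,2^{n-m}/(2^n-1)}$, and since $2^{n-m}/(2^n-1)=2^{-m}/(1-2^{-n})\to 2^{-m}$, we get $w_{n,m}\to\zeta_m:=e^{2\pi i/2^m}$; meanwhile $\int f\dd\mu_n\to\int f\dd\delta_1=1$ (either by continuity of the Sturmian family $\rho\mapsto\mu_\rho$ at $\rho=0$, or by the direct estimate $\int f\dd\mu_n=1+O(1/n)$, which follows by reindexing the sum $\tfrac1n\sum_{k<n}e^{2\pi i\,2^k/(2^n-1)}=\tfrac1n\sum_{j=1}^n e^{2\pi i/2^j}+O(2^{-n})$). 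As $g$ is continuous and $g(w_{n,m})=\int f\dd\mu_n$, we obtain $g(\zeta_m)=1$ for every $m\ge 1$; and $g(\zeta_0)=g(1)=f(1)=1$ trivially.

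Finally I telescope the cohomology relation. Since $\zeta_m^2=\zeta_{m-1}$, the identity $g(\zeta_m)=1$ reads $\zeta_m+h(\zeta_{m-1})-h(\zeta_m)=1$, i.e.\ $h(\zeta_m)-h(\zeta_{m-1})=\zeta_m-1$. Summing over $1\le m\le M$ gives $h(\zeta_M)-h(1)=\sum_{m=1}^M(\zeta_m-1)$. Letting $M\to\infty$, the left side tends to $0$ because $\zeta_M\to 1$ and $h$ is continuous, while the right side converges (absolutely) to $S:=\sum_{m\ge1}(e^{2\pi i/2^m}-1)$; hence $S=0$. But $\operatorname{Im} S=\sum_{m\ge1}\sin(2\pi/2^m)=0+1+\tfrac{\sqrt2}{2}+\cdots>0$, a contradiction. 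The one step needing genuine care is the identification of the supports $O_n$ together with the appeal to Bousch's extremality theorem (assumed here); granting that, everything else is bookkeeping and a single limiting argument, with no hyperbolicity or regularity machinery required.
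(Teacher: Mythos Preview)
Your proof is correct and follows essentially the same approach as the paper's: both use Bousch's theorem together with an extreme-point argument to pin down the value of $g$ on Sturmian supports, then evaluate along the homoclinic orbit $\zeta_m = e^{2\pi i/2^m}$ and telescope the cohomology relation to reach the same contradiction via $\operatorname{Im}\sum_{m\ge 1}(\zeta_m-1)>0$. The only tactical difference is in establishing $g(\zeta_m)=1$: the paper shows each $\zeta_m$ lies in the closed set $\{g\circ T=g\}$ (hence $g(\zeta_m)$ is constant in $m$ and equal to $g(1)=1$), whereas you approximate each $\zeta_m$ directly by points of the single Sturmian orbit $O_n$ with $n\to\infty$ and use $\int f\,d\mu_{1/n}\to 1$.
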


\begin{proof}
For a contradiction, suppose that there exists a continuous function $h \colon S^1 \to \C$ such that $g \coloneq f + h\circ T - h$ takes values in the fish.
For each integer $n \ge 0$, let $z_n \coloneqq e^{2\pi i / 2^{n}}$. 
These points form a homoclinic orbit 
$$
\dots \mapsto z_2 \mapsto z_1 \mapsto z_0 \hookleftarrow \quad \text{with} \quad \lim z_n  = z_0 \, .
$$
We claim that $g(z_n) = 1$ for every $n$.
This leads to a contradiction, because on one hand, 
the series $\sum_{n=1}^\infty (f(z_n)-1)$ is absolutely convergent to a non-zero sum (as the imaginary part is obviously positive), and on the other hand, by telescopic summation and continuity of $h$, the sum should be zero. 

In order to prove the claim, note that $g$ must constant equal to $\int f \dd \mu_\rho$ on the support of each Sturmian measure $\mu_\rho$.
In particular, the compact set $K \coloneqq (g \circ T - g)^{-1}(0)$ contains all those supports. 
Consider the obvious semiconjugacy $\phi$ between the one-sided two-shift and the doubling map $T$, namely the map  which associates to an infinite word $w = b_0 b_1 \dots$ in zeros and ones the complex number $\phi(w) = e^{2\pi i t}$ where $t = 0.b_1b_2\dots$ in binary. Then $\phi( 0^n 1 0^\infty ) = z_n$. On the other hand, for each $k\ge 0$, the periodic infinite word $( 0^n 1 0^k )^\infty$ is Sturmian, and it tends to $ 0^n 1 0^\infty$ as $k \to \infty$. This shows that each $z_n$ is the limit of points that belong to supports of Sturmian measures. In particular, each $z_n$ belongs to $K$. This means that the value $g(z_n)$ is independent of $n$. Since $z_0 = 1$ is a fixed point, we conclude that this value is $1$, as claimed. 
\end{proof}

Following \cite{Bousch-Jenk}, a function $f$ is called a \emph{weak coboundary} if $\int f \dd \mu = 0$ for every $\mu \in \cM_T$, or equivalently if $f$ is a uniform limit of coboundaries. There exist weak coboundaries $f$ that are not coboundaries; indeed this happens whenever $T$ is a non-periodic homeomorphism: see \cite{Kocsard}. The paper \cite{Bousch-Jenk} contains an explicit example of such $f$ in the case $T$ is the doubling map. The reason why their function $f$ is not a coboundary is that the sum over a homoclinic orbit is nonzero. This is exactly the obstruction we used in the proof of \cref{p.Mane_counterexample}. Therefore we pose the problem: Does it exist a function $g$ weakly cohomologous to $f$ taking values in the fish?

Naturally, there are many other questions about rotation sets.
We can ask about their shape, either for typical or for all functions with some prescribed regularity. 
It is shown in \cite{K-Wolf} that any compact convex subset of $\R^d$ is the rotation set of some continuous function. 
In the case of the fish, the boundary is not differentiable, and has a dense subset of corners, one at each extremal point corresponding to a measure supported on a periodic orbit; furthermore, and all the curvature of the boundary is concentrated at these corners.
This seems to be the typical situation of rotation sets of regular functions. 
Let us note that the boundary of Schartzmann balls is never differentiable: see \cite{Pol-Sharp} and references therein. 

Another property of the fish is the following: the closure of the union of the supports of the extremal measures has zero topological entropy (actually it has cubic complexity; see \cite[Corol.~18]{Mignosi}). Is this phenomenon typical?

\section{Optimization of the top Lyapunov exponent}\label{s.Lyapunov_top}

We now replace Birkhoff sums by matrix products.
That is, given a continuous map $F \colon X \to \Mat(d,\R)$ taking values into the space of $d \times d$ real matrices, we consider the products
$$
F^{(n)}(x) \coloneq F(T^{n-1} x) \cdots F(Tx) F(x) \, .
$$
The triple $(X,T,F)$ is called a \emph{linear cocycle} of dimension $d$.
It induces a skew-product dynamics on $X \times \R^d$ by $(x,v) \mapsto (Tx, F(x) v)$, whose $n$-th iterate is therefore $(x,v) \mapsto (T^n x, F^{(n)}(x) v)$. More generally, we could replace $X \times \R^d$ by any vector bundle over $X$ and then consider bundle endomorphisms that fiber over $T \colon X \to X$, but for simplicity will refrain from doing so.

As an immediate consequence of the Kingman's subadditive ergodic theorem,
for any $\mu \in \cM_T$ and $\mu$-almost every $x \in X$, the following limit, called the \emph{top Lyapunov exponent} at $x$, exists:
$$
\lambda_1 (F, x) \coloneq \lim_{n \to \infty} \frac{1}{n} \log \| F^{(n)}(x) \| \in [-\infty, +\infty), 
$$
which is clearly independent of the choice of norm on the space of matrices.
Similarly to what we did in \cref{s.Birkhoff_one_dim}, we can either minimize or maximize this number; the corresponding quantities will be denoted by $\alpha(F)$ and $\beta(F)$.
However, this time the maximization and the minimization problems are fundamentally different. 
While $\beta(F)$ is always attained by at least one measure (which will be called \emph{Lyapunov maximizing}), that is not necessarily the case for $\alpha(F)$. Indeed, absence of Lyapunov minimizing measures may be locally generic \cite[Rem.~1.13]{B-Morris}, and may occur even for ``derivative cocycles'' \cite{CLR}.

More generally, one can replace $\log \| F^{(n)} \|$ by any subadditive sequence of continuous functions, or even upper semicontinuous ones, and optimize the corresponding asymptotic average. One show check that a maximizing measure always exists, and that a enveloping property similar to \eqref{e.envelope_beta} holds. See the appendix of the paper \cite{Morris.Mather} for the proofs of these and other basic results on subadditive ergodic maximization. From these general results one can derive immediately those of \cite{Cao,Drag}, for example. 

The maximization of the linear escape rate of a cocycle of isometries also fits in the context of subadditive ergodic optmization. Under a nonpositive curvature assumption, this maximal escape rate satisfies a duality formula resembling \eqref{e.dual}: see \cite{B-Navas}. For some information on the maximal escape rate in the case of isometries of Gromov-hyperbolic spaces, see \cite{Oregon1}.

Returning to linear cocycles, note that if the matrices $F(x)$ are invertible then we can define a skew-product transformation $T_F$ on the compact space $X \times \R\P^{d-1}$ by $(x,[v]) \mapsto (Tx, [F(x) v])$. Then $\beta(F)$ can be seen as the maximal ergodic average of the function $f(x,[v]) \coloneq \log (\|F(x) v\|/\|v\|)$. In this way, maximization of the top Lyapunov exponent can be reduced to commutative ergodic optimization. Note, however, that the space of $T_F$-invariant probability measures depends on $F$ in a complicated way. 

There is a specific setting where optimization of the top Lyapunov exponent has been studied extensively.
An \emph{one-step cocycle} is a linear cocycle $(X,T,F)$ where $(X,T)$ is the full shift (either one- or two-sided) on a finite alphabet, say $\{1,\dots,k\}$, and the matrix $F(x)$ only depends on the zeroth symbol of the sequence $x$. Therefore an one-step cocycle is completely specified by a $k$-tuple of matrices $(A_1,\dots,A_k)$. 
It is possible to consider also compact alphabets, but for simplicity let us stick with finite ones.

The \emph{joint spectral radius} and the \emph{joint spectral subradius} of a tuple of matrices are respectively defined as the numbers $e^{\beta(F)}$ and $e^{\alpha(F)}$, where $(X,T,F)$ is the corresponding one-step cocycle. The joint spectral radius was introduced in 1960 by Rota and Strang \cite{Rota-S}, and it became a popular subject in the 1990's as applications to several areas (wavelets, control theory, combinatorics, etc) were found. The joint  spectral subradius was introduced later by Gurvits \cite{Gurvits}, and has also been the subject of some pure and applied research. See the book \cite{Jungers.book} for more information.

The first examples of one-step cocycles with finite alphabets without a Lyapunov-maximizing measure supported on a periodic orbit were first constructed in dimension $d=2$ by Bousch and Mairesse \cite{Bousch-Mai}, refuting the finiteness conjecture from \cite{Lagarias-W}.
Other constructions appeared later: see \cite{BTV}.
Counterexamples to the finiteness conjecture seem to be very non-typical: Maesumi \cite{Maesumi} conjectures that they have zero Lebesgue measure in the space of tuples of matrices, and Morris and Sidorov \cite{Morris-Sidorov} exhibit one-parameter families of pairs of matrices where counterexamples form a Cantor set of zero Hausdorff dimension.

The Lyapunov maximizing measures for the one-step cocycles in the examples from \cite{Bousch-Mai,Morris-Sidorov} (among others)  
are Sturmian and so have linear complexity. 
There are higher-dimensional examples with arbitrary polynomial complexity \cite{HMS}. In all known examples where the Lyapunov maximizing measure is unique, it has subexponential complexity, i.e., zero entropy. So the following question becomes inevitable: is this always the case?

A partial result in that direction was obtained by Rams and the author \cite{B-Rams}. 
We exhibit a large class of $2$-dimensional one-step cocycles 
for which the Lyapunov-maximizing and Lyapunov-minimizing measures have zero entropy;
furthermore the class includes counterexamples to the finiteness conjecture.
Our sufficient conditions for zero entropy are simple: existence of strictly invariant families of cones satisfying a non-overlapping condition. 
(In particular, our cocycles admit a dominated splitting; see \cref{s.Lyapunov_all} below for the definition.)
To prove the result, we identify a certain order structure on Lyapunov-optimal orbits (or more precisely on the Oseledets directions associated to those orbits) that leaves no room for positive entropy.

In the setting considered in \cite{B-Rams} (or more generally for cocycles that admit a dominated splitting of index $1$), Lyapunov-minimizing measures do exist, and moreover the minimal top Lyapunov exponent $\alpha$ is continuous among such cocycles. For one-step cocycles that admit no such splitting, $\beta$ is still continuous, but $\alpha$ is not: see \cite{B-Morris}. In fact,  even though discontinuities of $\alpha$ are topologically non-generic, we believe that they form a set of positive Lebesgue measure: \cite[Conj.~7.7]{B-Morris}.

Let us now come back to general linear cocycles, but let us focus the discussion on the maximal top Lyapunov exponent $\beta$.
As we have seen in \cref{s.Birkhoff_one_dim}, Ma\~n\'e Lemma is a basic tool in $1$-dimensional commutative ergodic optimization. 
Let us describe related a notion in the setting of Lyapunov exponents.

Let $(X,T,F)$ be a linear cocycle of dimension $d$. A \emph{Finsler norm} is a family $\{\| \mathord{\cdot} \|_x\}$ of norms in $\R^d$ depending continuously on $x \in X$. An \emph{extremal norm} is a Finsler norm such that
$$
\|F(x) v \|_{Tx} \le e^{\beta(F)} \|v\|_x \quad \text{for all $x \in X$ and $v \in \R^d$.}
$$
 
In the case $d=1$, the linear maps $F(x) \colon \R \to \R$ consist of multiplication by scalars $\pm e^{f(x)}$, and the maximal Lyapunov exponent $\beta(F)$ of the cocycle equals the maximal ergodic average $\beta(f)$ of the function $f$. 
Moreover, an arbitrary Finsler norm can be written as $\|v\|_x = e^{h(x)} |v|$, and it will be an extremal norm if and only if $f + h \circ T - h \le \beta(f)$, which is the cohomological inequality \eqref{e.Mane}.
So the relation with Ma\~n\'e Lemma becomes apparent and we see that existence of an extremal norm is far from automatic even in dimension $d=1$.

Extremal norms were first constructed by Barabanov \cite{Barabanov} in the case of one-step cocycles: he showed that under an irreducibility assumption (no common invariant subspace, except for the trivial ones), there exists an extremal norm that is constant (i.e.\  $\| v \|_x$ is independent of the basepoint $x$).\footnote{Barabanov's norms also have an extra property that does not concern us here. Previously, Rota and Strang \cite{Rota-S} have already considered the weaker notion of extremal \emph{operator} norms.} 
These extremal norms provide a fundamental tool in the study of the joint spectral radius: see also \cite{Wirth,Jungers.book}.

Beyond one-step cocycles, when can we guarantee the existence of an extremal norm?
Garibaldi and the author \cite{B-Garibaldi} consider the situation where $T$ is a hyperbolic homeomorphism and $F \colon X \to \GL(d,\R)$ is a $\theta$-H\"older continuous map. 
As it happens often in this context, it is useful to assume \emph{fiber bunching}, which roughly means that the largest rate under which the matrices $F(x)$ distort angles is bounded by $\tau^\theta$, where $\tau>1$ is a constant related to the hyperbolicity of $T$.
(Note that locally constant cocycles, being locally constant, are $\theta$-H\"older for arbitrarily large $\theta$, and so always satisfy fiber bunching; in this sense, our setting generalizes the classical one.)
We say that the cocycle is \emph{irreducible} if has no $\theta$-H\"older invariant subbundles, except for the trivial ones. The main result of \cite{B-Garibaldi} is that strong fiber bunching together with irreducibility implies the existence of an extremal norm. 
Let us also mention a curious fact: there are examples where the extremal norm cannot be Riemannian.

The existence of an extremal norm is a first step towards more refined study of maximizing measures: for example, it implies the existence of a \emph{Lyapunov maximizing set}, similarly to the maximizing sets discussed in \cref{s.Birkhoff_one_dim}. Such sets were studied in \cite{Morris.Mather} for one-step cocycles.

We can recast in the present context the same type of questions discussed above: How complex are Lyapunov-maximizing measures, either for typical cocycles, or (assuming uniqueness) among all cocycles within a prescribed regularity class?

\section{Optimization of all Lyapunov exponents}\label{s.Lyapunov_all}

In this final \lcnamecref{s.Lyapunov_all}, we consider all Lyapunov exponents and not only the top one. 
Our aim is modest: to introduce an appropriate setting for ergodic optimization of Lyapunov exponents, and to check that the most basic properties seen in the previous sections are still valid.

Let $\s_1(g) \ge \dots \ge \s_d(g)$ denote the \emph{singular values} of a matrix $g \in \GL(d,\R)$. 
These are the semi-axes of the ellipsoid $g(S^{d-1})$, where $S^{d-1}$ is the unit sphere in $\R^d$.
The \emph{Cartan projection} is the map
\begin{equation}\label{e.Cartan}
\vec{\sigma}(g) \coloneq \big( \log \s_1(g), \dots, \log \s_d(g) \big) \, ,
\end{equation}
which takes values in the \emph{positive chamber}
$$
\fa^+ \coloneq \big\{(\xi_1,\dots,\xi_d) \in \R^d \st \xi_1\ge \cdots \ge \xi_d\big\} \, .
$$
The Cartan projection has the subadditive property 
\begin{equation}\label{e.Cartan_subadd}
\vec{\sigma}(gh) \preccurlyeq \vec{\sigma}(g) + \vec{\sigma}(h) \, ,
\end{equation} 
where $\preccurlyeq$ denotes the \emph{majorization partial order} in $\R^d$ defined in as follows: 
$\xi \preccurlyeq \eta$ (which reads as $\xi$ is \emph{majorized} by $\eta$)
if $\xi$ is a convex combination of vectors obtained by permutation of the entries of $\eta$.
The group of automorphisms of $\R^d$ consisting of permutation of coordinates is called the \emph{Weyl group} and is denoted $\cW$; so $\xi \preccurlyeq \eta$ if and only if $\xi$ belongs to the polyhedron $\conv (\cW \eta)$ (called a \emph{permuthohedron}).
For vectors $\xi = (\xi_1,\dots,\xi_d)$ and $\eta=(\eta_1,\dots,\eta_d)$ in the positive chamber $\fa^+$, majorization can be characterized by the following system of inequalities:
$$
\xi \preccurlyeq \eta \quad \Leftrightarrow \quad \forall i \in \{1,\dots,d\}, \
\xi_1 + \cdots + \xi_i \le \eta_1 + \cdots + \eta_i \, , \text{with equality if $i=d$.}
$$
For a proof, see the book \cite{MOA}, which contains plenty of information on majorization, including applications.

Now let us consider a linear cocycle $(X,T,F)$.
For simplicity, let us assume that the matrices $F(x)$ are invertible.
Using Kingman's theorem, one shows that for every $\mu \in \cM_T$ and $\mu$-almost every $x \in X$, the limit
$$
\vec\lambda (F, x) \coloneq \lim_{n \to \infty} \frac{1}{n} \vec\sigma(F^{(n)}(x)) \, 
$$
exists; it is called the \emph{Lyapunov vector} of the point $x$.
Its entries are called the \emph{Lyapunov exponents}. 
If $\mu$ is ergodic then the Lyapunov vector is $\mu$-almost surely equal to a constant $\vec\lambda (F, \mu)$.
The \emph{Lyapunov spectrum} of the cocycle is defined as:
$$
L^+(F) \coloneq \left\{ \vec\lambda (F, \mu) \st \mu \in \cE_T \right\} \subset \fa^+ \, .
$$
By analogy with the rotation set \eqref{e.rot_set_erg}, we introduce the \emph{inner envelope} of the cocycle as
the closed-convex hull of the Lyapunov spectrum, that is, 
$I^+(F) \coloneq \overline{\conv} (L^+(F))$. \footnote{In recent work, Sert \cite{Sert} considers one-step cocycles taking values on more general Lie groups and satisfying a Zariski denseness assumption, introduces and studies a subset of the positive chamber called \emph{joint spectrum}, and applies it to obtain results on large deviations. It turns out that the joint spectrum coincides with our inner envelope $I^+(F)$, in the $\SL(d,\R)$ case at least.}
Differently from the commutative situation, however, extremal points of this convex set are not necessarily attained.
Therefore we introduce other sets (see \cref{f.spectra}):
\begin{alignat*}{2}
L(F)	&\coloneq \cW \cdot L^+(F)	&\quad &\text{\emph{(symmetric Lyapunov spectrum)};} \\
I(F)	&\coloneq \cW \cdot I^+(F)	&\quad &\text{\emph{(symmetric inner envelope)};} \\
O(F)	&\coloneq \conv (I(F))		&\quad &\text{\emph{(symmetric outer envelope)};} \\
O^+(F)	&\coloneq O(F) \cap \fa^+	&\quad &\text{\emph{(outer envelope)}.} 
\end{alignat*}
Then the extremal points of the symmetric outer envelope $O(F)$ 
are attained as (perhaps reordered) Lyapunov vectors of ergodic measures:
\begin{equation}\label{e.ext_outer}
\ext(O(F)) \subseteq L(f)  \, .
\end{equation}
Sometimes this is the best we can say about attainability, but sometimes we can do better.
There is one situation where all extremal points of the symmetric inner envelope $I(f)$ are attained, namely if the cocycle admits a dominated splitting into one-dimensional bundles, because then we are essentially reduced to rotation sets in $\R^d$.

Let us explain the concept of domination.
For simplicity, let us assume that $T \colon X \to X$ is a homeomorphism. Let us also assume that there is a fully supported $T$-invariant probability measure (otherwise we simply restrict $T$ to the minimal center of attraction; see \cite[Prop.\ 8.8(c)]{Akin}).

Suppose that $V$ and $W$ are two $F$-invariant subbundles of constant dimensions.
We say that \emph{$V$ dominates $W$} if there are constants $\kappa_0 > 1$ and $n_0 \ge 1$ such that 
for every $x \in X$ and every $n \ge n_0$, the smallest singular value of $F^{(n)}(x)|_{V(x)}$ is bigger than $\kappa_0^n$ times the biggest singular value of $F^{(n)}(x)|_{W(x)}$. We say that $V$ is the \emph{dominating} bundle, and $W$ is the \emph{dominated} bundle. The terminology \emph{exponentially separated splitting} is more common in ODE and Control Theory, and other terms also appear especially in the earlier literature, but we will stick to the terminology \emph{dominated splitting}, though grammatically inferior. The bundles $V$ and $W$ are in fact continuous, and they are robust with respect to perturbations of $F$: see \cite{BDV.book} for this and other basic properties of domination.

The cocycle has a unique \emph{finest dominated splitting}; this is a finite collection of invariant subbundles $V_1$, $V_2$, \dots, $V_k$, each one dominating the next one, and with maximal $k$. It is indeed a splitting in the sense that $V_1(x) \oplus \cdots \oplus V_k(x) = \R^d$ for every $x$.
If $k=1$ then the splitting is called \emph{trivial}.

We say that $i \in \{1,\dots,d-1\}$ is a \emph{index of domination} of the cocycle if there exists a dominated splitting with a dominating bundle of dimension $i$; otherwise we say that that $i$ is a \emph{index of non-domination}. 
So, if $V_1 \oplus \cdots \oplus V_k$ is the finest dominated splitting of the cocycle, and $d_i \coloneq \dim V_i$, then the indices of domination are $d_1$, $d_1+d_2$, \dots, $d_1 + d_2 + \cdots + d_{k-1}$. 

There is a way of detecting domination without referring to invariant subbundles or cones. 
As shown in \cite{B-Gourmelon},
$i \in \{1,\dots,d-1\}$ is a index of domination if and only if there is an exponential gap between $i$-th and $(i+1)$-th singular values; more precisely:
there are constants $\kappa_1>1$ and $n_1 \ge 1$ such that $\s_i(F^{(n)}(x))/\s_{i+1}(F^{(n)}(x)) \ge \kappa_1^n$ for all $x \in X$ and all $n \ge n_1$.
In terms of the sets
\begin{equation}\label{e.Sigma}
\Sigma_n(F) \coloneq \big\{ \vec{\sigma}(F^{(n)}(x)) \st x \in X \big\} \subset \fa^+ \, ,
\end{equation}
we have the following geometric characterization: $i$ is an index of domination if and only if for all sufficiently large $n$, the sets $\frac{1}{n} \Sigma_n(F)$ are uniformly away from the \emph{wall} $\xi_i = \xi_{i+1}$ (a hyperplane that contains part of the boundary of the positive chamber $\fa^+$).

If $\Theta$ is a subset of $\{1,2,\dots,d-1\}$, define the \emph{$\Theta$-superchamber} as the following closed convex subset of $\R^d$:
$$
\fa^\Theta \coloneq \big\{ 
(\xi_1,\dots,\xi_d) \in \R^d \st 1 \le i \le j < k \le d \text{ integers, } j \not\in\Theta \ \Rightarrow \ \xi_i \ge \xi_k \big\} \, . 
$$
For example, $\fa^\Theta = \fa^+$ if $\Theta$ is empty, and $\fa^\Theta =  \R^d$ if $\Theta = \{1,2,\dots,d-1\}$.
(Moreover, $\fa^\Theta$ equals the orbit of $\fa^+$ under an appropriate subgroup of $\cW$, but we will not need this fact.)
If $C$ is any subset of $\R^d$, the \emph{closed-$\Theta$-convex} hull of $C$, denoted by $\overline{\conv}_\Theta(C)$, is defined as the smallest closed subset of $\R^d$ that contains $C$, is invariant under the Weyl group $\cW$, and whose intersection with the superchamber $\fa^\Theta$ is convex.

Let $\Theta$ be the set of indices of non-domination of the cocycle $(X,T,F)$.
We define the following two sets:
\begin{alignat*}{2}
M(F) 	&\coloneq \overline{\conv}_\Theta \left( L^+(F) \right)	&\quad &\text{\emph{(symmetric Morse spectrum)};} \\
M^+(F) 	&\coloneq \fa^+ \cap M(F) 								&\quad &\text{\emph{(Morse spectrum)}.}
\end{alignat*}
The Morse spectra (symmetric or not) are sandwiched between the inner and outer envelopes: see \cref{f.spectra}).
If $\Theta = \emptyset$ then $M(F) = I(F)$, while if $\Theta = \{1,\dots,d-1\}$ then $M(F) = O(F)$.


\begin{figure}[hbt]
\begin{center}
\begin{tikzpicture}[scale=.23,font=\footnotesize]
	
	\draw[fill=gray!20] (9,6) arc(0:60:2) --(2.69,10.79) arc(60:120:2) --(-9.7,4.79) arc(120:180:2) --(-10.7,-3.06) arc(180:240:2) --(.7,-10.79) arc(240:300:2) --(8,-7.73) arc(300:360:2) --cycle;

	\draw[fill=gray!60] (6,4.27) arc(-120:60:2) --(2.69,10.79) arc(60:240:2) --cycle;  
	\draw[fill=gray!60] (6,-4.27) arc(120:-60:2) --(2.69,-10.79) arc(-60:-240:2) --cycle;  
	\draw[fill=gray!60] (-6.7,3.06) arc(0:180:2) --(-10.7,-3.06) arc(180:360:2) --cycle;
	
	\fill[gray] (7,6)		circle(2);
	\fill[gray]  (1.7,9.06)	circle(2);
	\fill[gray]  (-8.7,3.06)circle(2);
	\fill[gray] (-8.7,-3.06)circle(2);
	\fill[gray]  (1.7,-9.06)circle(2);
	\fill[gray]  (7,-6)		circle(2);

	\draw (6,4.27) arc(-120:60:2) --(2.69,10.79) arc(60:240:2) --cycle;  
	\draw (6,-4.27) arc(120:-60:2) --(2.69,-10.79) arc(-60:-240:2) --cycle;  
	\draw (-6.7,3.06) arc(0:180:2) --(-10.7,-3.06) arc(180:360:2) --cycle;
	
	\draw[thick,dashed]	(8,7.73)		arc(60:240:2);
	\draw[thick] 		(8,7.73)		arc(60:-120:2);
	\draw[thick]		(2.69,10.79)	arc(60:240:2);
	\draw[thick,dashed] (2.69,10.79)	arc(60:-120:2);
	\draw[thick,dashed]	(8,-7.73)		arc(-60:-240:2);
	\draw[thick] 		(8,-7.73)		arc(-60:120:2);
	\draw[thick]		(2.69,-10.79)	arc(-60:-240:2);
	\draw[thick,dashed] (2.69,-10.79)	arc(-60:120:2);
	\draw[thick]		(-6.7,3.06)		arc(0:180:2);
	\draw[thick,dashed]	(-6.7,3.06)		arc(0:-180:2);
	\draw[thick,dashed]	(-6.7,-3.06)	arc(0:180:2);
	\draw[thick]		(-6.7,-3.06)	arc(0:-180:2);

	\draw (-12,0)--(12,0)			node[right]			{wall $\xi_1=\xi_2$};
	\draw (-6,-10.39)--(6,10.39)	node[above right]	{wall $\xi_2=\xi_3$};
	\draw (6,-10.39)--(-6,10.39)	node[above left]	{wall $\xi_1=\xi_3$};
	
	\draw[white] (7,6) node {\small $L^+(F)$};
	\draw (11,6) node[right] {positive chamber};
	
\end{tikzpicture}
\end{center}
\caption{Suppose $F$ takes values in $\SL(3,\R)$; then all spectra are  contained in the plane $ \{(\xi_1,\xi_2,\xi_3) \in \R^3 \st \xi_1+\xi_2+\xi_3=0\}$. The figure shows a possibility for the three sets $I(F) \subseteq M(F) \subseteq O(F)$, which are are pictured in decreasing shades of gray, assuming that the unique index of domination is $1$, i.e., $\Theta = \{2\}$.}
\label{f.spectra}
\end{figure}
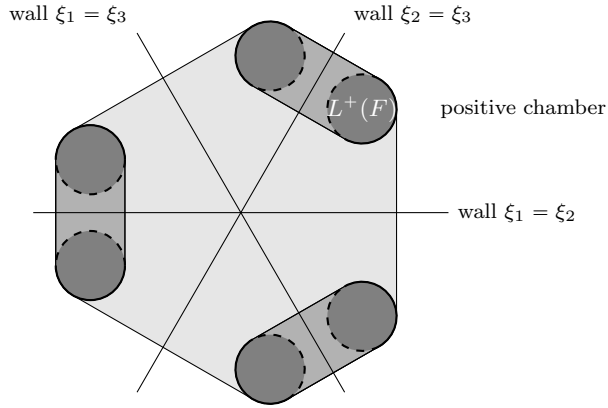

The Morse spectrum allows us to state the following \emph{attainability property}, which is stronger than \eqref{e.ext_outer}:
\begin{equation}\label{e.ext_Morse}
\ext(M(F)) \subseteq L(F) \, . 
\end{equation}

The name ``Morse'' comes from Morse decompositions in Conley theory \cite{Conley}.
Morse spectra were originally defined by Colonius and Kliemann \cite{CK.morse}; see also \cite{CFJ}.
The Morse spectra defined here are more closely related to the ones considered by San~Martin and Seco \cite{San-Seco} (who in fact dealt with more general Lie groups).
In concrete terms, we have the following characterization: 
$\xi \in M^+(F)$ if and only if there exist
sequences $n_i \to \infty$ and $\epsilon_i \to 0$,
$\epsilon_i$-pseudo orbits $(x_{i,0}, x_{i,1}, \dots)$,
and matrices $g_{i,j} \in \GL(d,\R)$ with $\|g_{i,j} - F(x_{i,j})\| < \epsilon_i$ such that: 
$$
\frac{1}{n_i} \vec{\sigma} \big( g_{i, n_i-1} \cdots g_{i,1} g_{i,0} \big) \to \xi \, .
$$
We will not provide a proof.
For background on Morse spectra defined in terms of pseudo orbits and relations with dominated splittings and Lyapunov exponents (and without Lie algebra terminology), see the book \cite{CK.book2}.
Let us remark that all those type of Morse spectra contain more information that the Mather \cite{Mather.spectrum} and Sacker--Sell spectra \cite{Drag}. Indeed, the finest dominated splitting refines the Sacker--Sell decomposition, which may be seen as the finest \emph{absolute} (as opposed to \emph{pointwise}) dominated splitting.

Let us inspect other basic properties of the Morse spectra which are similar to properties of the rotation sets seen in \cref{s.Birkhoff_any_dim}. The following \emph{enveloping property} is analogous to \eqref{e.envelope_any_dim}:
\begin{equation}\label{e.envelope_Morse}
M(F) = \bigcap_{n \in \N} \frac{1}{n} \overline{\conv}_\Theta( \Sigma_n(F) )   \,  ,
\end{equation}
and again the intersection above is a limit.
In particular, the Morse spectrum $M(\mathord{\cdot})$ is an upper semicontinuous function of $F$ (with respect to the uniform topology). If the cocycle has a dominated splitting into $k=d$ bundles then $M(\mathord{\cdot})$ is continuous at $F$. 

Two cocycles $F$ and $G$ over the same base $(X,T)$ are called \emph{conjugate} if there is continuous map $H \colon X \to \GL(d,\R)$ such that $G(x) =  H(Tx)^{-1} F(x) H(x)$. 
The Morse spectrum is invariant under cocycle conjugation.

Let us state a result similar to the duality property \eqref{e.dual} and its vectorial counterpart explained in \cref{s.Birkhoff_any_dim}:

\begin{proposition}\label{p.approx_Mane}
Given a neighborhood $U$ of $M^+(F)$, there exists a cocycle $G$ conjugate to $F$ such that 
$\Sigma_1(G) \subset U$.
\end{proposition}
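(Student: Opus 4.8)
\section*{Proof plan}

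The plan is to imitate the template of the dual formula \eqref{e.dual} and of its vectorial Birkhoff analogue recalled in \cref{s.Birkhoff_any_dim}, with the scalar averaging operation $h\mapsto\tfrac1n\sum_{i=1}^n f^{(i)}$ replaced by an averaging carried out in the non-positively curved geometry of the space of Euclidean norms on $\R^d$.

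\textbf{Step 1: reduction via the enveloping property.} Fix $\epsilon>0$ small enough that the $\epsilon$-neighbourhood of $M^+(F)$ is contained in $U$; this is possible since $M^+(F)$ is compact and $U$ is open. By the enveloping property \eqref{e.envelope_Morse}, the sets $\tfrac1n\overline{\conv}_\Theta(\Sigma_n(F))$ converge to $M(F)$ in the Hausdorff metric, so there is an $n$ with $\tfrac1n\overline{\conv}_\Theta(\Sigma_n(F))$ contained in the $(\epsilon/2)$-neighbourhood of $M(F)$; in particular so is $\tfrac1n\Sigma_n(F)$. Since $\Sigma_n(F)\subseteq\fa^+$, the coordinate-sorting map (which is $1$-Lipschitz and fixes $M^+(F)=M(F)\cap\fa^+$) shows that in fact $\tfrac1n\Sigma_n(F)$ lies in the $(\epsilon/2)$-neighbourhood of $M^+(F)$. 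Hence it suffices to produce, for this fixed $n$, a cocycle $G$ conjugate to $F$ with $\Sigma_1(G)$ within Hausdorff distance $\epsilon/2$ of $\tfrac1n\Sigma_n(F)$.

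\textbf{Step 2: the conjugation.} A conjugation $G(x)=H(Tx)^{-1}F(x)H(x)$ with $H\colon X\to\GL(d,\R)$ continuous is the same thing as a continuous choice of Euclidean norm $\|v\|_x:=\|H(x)v\|$ on the fibres; with this dictionary $\vec\sigma(G(x))$ is exactly the positive-chamber--valued displacement of the section $x\mapsto\|\cdot\|_x$, i.e.\ the Cartan projection comparing $\|\cdot\|_{Tx}$ with the push-forward of $\|\cdot\|_x$ by $F(x)$. Thus the task is to choose a section whose displacement is uniformly (over $x\in X$) close to $\tfrac1n\Sigma_n(F)$, hence to $M^+(F)$. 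The multiplicative counterpart of $h=\tfrac1n\sum_{i=1}^n f^{(i)}$ is to build $\|\cdot\|_x$ by \emph{averaging a window of partial products}: one takes a suitable barycenter (or fixed point) in the symmetric space of Euclidean norms of the norms $v\mapsto\|F^{(i)}(x)v\|$ over a window of length comparable to $n$, possibly two-sided since $T$ is a homeomorphism and the matrices are invertible. Because that symmetric space is non-positively curved, such barycenters are well defined, depend continuously on $x$, and their displacement is controlled by an average of the displacements over the window; for the operator norm (the top singular value, i.e.\ the first coordinate) the required ``$\epsilon$-extremal'' estimate is exactly the duality for escape rates of isometry cocycles of non-positively curved spaces from \cite{B-Navas}. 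After splitting off the determinant line — where the one-dimensional result invoked in \cref{s.Birkhoff_any_dim}, applied to $\log|\det F|$, produces the correct range by the trick of \eqref{e.dual} — one must promote this to simultaneous control of all exterior powers, using the description of the indices of non-domination $\Theta$ by the absence of exponential singular-value gaps to force the full single-step Cartan vector into the set $\tfrac1n\overline{\conv}_\Theta(\Sigma_n(F))$.

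\textbf{Step 3: the main obstacle.} The delicate point is precisely the construction and verification in Step 2 for the full Cartan projection. Unlike in the commutative case there is no canonical ``$n$-th root of $F^{(n)}$'' that is conjugate to $F$: matrix products do not commute and consecutive partial products $F^{(i)}(x)$ need not be uniformly close, so the naive averaging must be replaced by the CAT$(0)$ barycentric one and one must verify the displacement estimate \emph{uniformly in} $x$ (a compactness/uniform-continuity argument). Moreover one must land near the \emph{Morse} spectrum $M^+(F)$ rather than merely near the outer envelope: the gain of room provided by the $\overline{\conv}_\Theta$ in \eqref{e.envelope_Morse} is indispensable, because along the directions of the finest dominated splitting the transient fluctuations of the single-step data cannot be removed but are absorbed by the $\Theta$-convexification. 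Reconciling ``absorb the within-block fluctuations'' with ``use a single global continuous metric'' — rather than conjugating block by block, which would only yield a neighbourhood of the (generally strictly larger) product of the blocks' outer envelopes — is the technical heart of the proof. Everything else (the Hausdorff convergence in Step 1, the conjugation-invariance of $M(\cdot)$, and the bookkeeping identifying $\vec\sigma(G(x))$ with a displacement) is routine given the results already established.
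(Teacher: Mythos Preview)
Your plan shares the paper's architecture --- reduce via the enveloping property \eqref{e.envelope_Morse} to a fixed time, then average in the non-positively curved space $\cS$ of inner products as in \cite{B-Navas} --- but Step~2 has a genuine gap at precisely the point you flag in Step~3. The trouble with ``promote to all exterior powers'' is that the conjugation must be a \emph{single} continuous $H\colon X\to\GL(d,\R)$; applying the scalar \cite{B-Navas} argument to each $\Lambda^i F$ separately would produce unrelated metrics on the $\Lambda^i\R^d$, not ones of the form $\Lambda^i H$. So the partial sums $\s_1+\cdots+\s_i$ cannot be controlled independently, and your sketch gives no mechanism for doing them simultaneously.

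The paper's resolution is a \emph{vectorial} Busemann NPC inequality in the majorization order,
\[
\vec{\delta}\big(\midpoint[r,p],\,\midpoint[r,q]\big)\ \preccurlyeq\ \tfrac12\,\vec{\delta}(p,q),
\]
proved in one line from the identity $\vec\sigma(g)=\tfrac12\vec\chi(gg^{\mathsf t})$ together with the fact that the Cartan projection majorizes the Jordan projection. This controls the entire Cartan vector at once. The averaging is then not a barycenter of a window but an \emph{iterated midpoint} over $k=\log_2 N$ levels: with $\psi_0\equiv o$ and $\psi_{j+1}(x)=\midpoint\big[(F^{(2^{k-j-1})}(x))^{-1}*\psi_j(T^{2^{k-j-1}}x),\,\psi_j(x)\big]$, the vectorial inequality halves the displacement at each step and yields $\vec{\delta}\big(\phi(Tx),F(x)*\phi(x)\big)\preccurlyeq \tfrac1N\vec{\sigma}(F^{(N)}(x)*o)$ for $\phi=\psi_k$.

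Two smaller corrections. First, the output is not that $\Sigma_1(G)$ is Hausdorff-close to $\tfrac1N\Sigma_N(F)$: each $\vec\sigma(G(x))$ is only \emph{majorized} by a point of $\tfrac1N\Sigma_N(F)$. This suffices because in the case $\Theta=\{1,\dots,d-1\}$ the set $M^+(F)$, and hence without loss of generality $U\cap\fa^+$, is closed under majorization --- a point your Step~1 reduction misses. Second, the role of $\Theta$ is not to absorb within-block fluctuations of the averaging. When the finest dominated splitting is nontrivial, the paper first conjugates so that the subbundles are orthogonal and then runs the same midpoint construction, which respects the block structure; the $\Theta$-convexification enters only through the shape of $M^+(F)$ itself, not through the mechanics of the metric.
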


The proof requires some preliminaries.
Let $\cS$ denote the space of inner products in $\R^d$.
The group $\GL(d,\R)$ acts transitively on $\cS$ as follows:
$$
\biangle{\mathord{\cdot}, \mathord{\cdot}}_2 = g * \biangle{\mathord{\cdot}, \mathord{\cdot}}_1 
\quad \Leftrightarrow \quad
\biangle{u,v}_2 = \biangle{g^{-1} u , g^{-1} v}_1  \, .
$$
Using the standard inner product $\langle \mathord{\cdot}, \mathord{\cdot} \rangle \eqcolon o$ as a reference, every element $\biangle{\mathord{\cdot}, \mathord{\cdot}}$ of $\cS$ can be uniquely represented by a positive (i.e.\ positive-definite symmetric) matrix $p$ such that $\biangle{u,v} = \langle p^{-1} u, v \rangle$.
In this way we may identify $\cS$ with the set of positive matrices, and $o$ is identified with the identity matrix. 
In these terms, the group action becomes: 
$$
g * p = g p g^\mathsf{t} \, .
$$

The \emph{vectorial distance} is defined as the following map:
$$
\vec{\delta} \colon \cS \times \cS \to \fa^+ \, , \qquad
\vec{\delta}(p,q) \coloneq 2 \vec{\sigma}(p^{-1/2}q^{1/2}) \, ,
$$
where $\vec{\sigma}$ is the Cartan projection \eqref{e.Cartan}.
It has the following properties:
\begin{enumerate}
\item $\vec{\delta} (o, q) = \vec{\sigma}(q)$;
\item $\vec{\delta}$ is a complete invariant for the group action on pairs of points, that is, $\vec{\delta} (p_1, q_1) = \vec{\delta} (p_2, q_2)$ if and only if there exists $g \in \GL(d, \R)$ such that $g*p_1=p_2$ and $g*q_1=q_2$;
\item $\vec{\delta}(p,p) = 0$;
\item $\vec{\delta}(q,p) = \mathbf{i} (\vec{\delta}(p,q) )$, where 
$\mathbf{i} (\xi_1,\dots,\xi_d) \coloneqq (-\xi_d,\dots,-\xi_1)$
is the \emph{opposition involution};
\item triangle inequality: $\vec{\delta}(p,r) \preccurlyeq \vec{\delta}(p,q) + \vec{\delta}(q,r)$; this follows from \eqref{e.Cartan_subadd}.
\end{enumerate}
In particular, the euclidian norm of $\vec{\delta}$ is a true distance function, and it invariant under the action; indeed that is the usual way to metrize $\cS$.

A parameterized curve $\gamma \colon [0,1] \to \cS$ is called a \emph{geodesic segment} if there is a vector $\xi \in \fa^+$ such that $\vec{\delta}(f(t), f(s)) = (s-t) \xi$, provided $t \le s$. 
A geodesic segment is determined uniquely by its endpoints $p = f(0)$ and $q =f(1)$; it is given by the formula $f(s) = q^s$ if $p=o$. The image of $f$ is denoted $[p,q]$ and by abuse of terminology is also called a geodesic segment. The \emph{midpoint} of the geodesic segment is $\midpoint [p,q] \coloneqq f(1/2)$.

We shall prove the following vectorial version of the Busemann nonpositive curvature inequality: 
\begin{equation}\label{e.Bus}
\vec{\delta} \big( \midpoint[r,p] , \midpoint[r,q] \big) \preccurlyeq \tfrac{1}{2} \vec{\delta}(p , q) \, , \quad
\text{for all } r, p, q \in \cS \, .
\end{equation}
Parreau \cite{Parreau} has announced a general version of this inequality that holds in other symmetric spaces and affine buildings, using the appropriate partial order.

In order to prove \eqref{e.Bus}, consider the \emph{Jordan projection} $\vec{\chi} \colon \GL(d,\R) \to \fa^+$ defined by 
$\vec{\chi}(g) \coloneq \big( \log |z_1|, \dots, \log |z_d| \big)$, 
where $z_1$, \dots, $z_d$ are the eigenvalues of $g$, ordered so that $|z_1| \ge \cdots \ge |z_d|$.
The Jordan projection is cyclically invariant, that is, $\vec{\chi}(gh) = \vec{\chi}(hg)$.
The Cartan and Jordan projections are related by $\vec{\sigma}(g) = \frac{1}{2} \vec{\chi}(gg^\mathsf{t})$.
Another property is that Cartan majorizes Jordan: $\vec{\sigma}(g) \succcurlyeq \vec{\chi} (g)$.
This follows from the fact that the spectral radius of a matrix is less than or equal to its top singular value, applied to $g$ and its exterior powers. 

\begin{proof}[Proof of the vectorial Busemann NPC inequality \eqref{e.Bus}]
Take arbitrary $r$, $p$, $q \in \cS$.
Since the vectorial distance is invariant under the group action, it is sufficient to consider the case where $r = o$.
Then the midpoints under consideration are $p^{1/2}$ and $q^{1/2}$.
Using the definition of $\vec{\delta}$ and the properties of the projections $\vec{\sigma}$ and $\vec{\chi}$, we have:
\begin{multline*}
\vec{\delta} (p^{1/2} , q^{1/2}) 
= 2 \vec{\sigma} \big(p^{-1/4} q^{1/4}\big) 
= \vec{\chi} \big(p^{-1/4} q^{1/2} p^{-1/4}\big) \\ 
= \vec{\chi} \big(p^{-1/2} q^{1/2}\big)
\preccurlyeq \vec{\sigma} \big(p^{-1/2} q^{1/2}\big) 
= \tfrac{1}{2} \vec{\delta} (p, q) \, . \qedhere
\end{multline*}
\end{proof}

\begin{proof}[Proof of \cref{p.approx_Mane}]
We will adapt the argument from \cite[p.~383--384]{B-Navas}, using \eqref{e.Bus} instead of the ordinary Busemann NPC inequality.
Consider the case $\Theta = \{1,\dots,d-1\}$, i.e., the cocycle no nontrivial dominated splitting.
Then $M^+(F)$ is closed under majorization, in the sense that:
$$
\xi \in \fa^+, \ \eta \in M^+(F), \ \xi \preccurlyeq \eta \quad \Rightarrow \quad \xi \in M^+(F)
$$
Fix a neighborhood $U \supset M^+(F)$.
Without loss of generality, we may assume that $U \cap \fa^+$ is closed under majorization.
We want to find $G$ conjugate to $F$ such that $\Sigma_1(G) \subset U$.
By \eqref{e.envelope_Morse}, for sufficiently large $N$ we have $\frac{1}{N} \Sigma_N(F) \subset U$. 
Fix such $N$ of the form $N = 2^k$.

Let us define recursively continuous maps $\psi_0$, $\psi_1$, \dots, $\psi_k \colon X \to \cS$ as follows:
$\psi_{0}$ is constant equal to $o$, and 
$$
\psi_{j+1}(x) \coloneqq 
\midpoint \big[ (F^{(2^{k-j-1})}(x))^{-1} * \psi_{j}(T^{2^{k-j-1}} x), \psi_{j}(x) \big] \, ,
$$
Then:
$$
\underbrace{\psi_{j+1}(T^{2^{k-j-1}} x)}_{\circled{1}} = 
\midpoint \big[ \underbrace{(F^{(2^{k-j-1})}(T^{2^{k-j-1}} x))^{-1} * \psi_{j}(T^{2^{k-j}} x)}_{\circled{2}}, \psi_{j}(T^{2^{k-j-1}} x) \big] 
$$
and, using equivariance of midpoints,
$$
\underbrace{F^{(2^{k-j-1})}(x) * \psi_{j+1}(x)}_{\circled{3}} =
\midpoint \big[ \psi_{j}(T^{2^{k-j-1}} x), \underbrace{F^{(2^{k-j-1})}(x) * \psi_{j}(x)}_{\circled{4}} \big] \, .
$$
By \eqref{e.Bus}, we have
$\vec{\delta} \big( \circled{1}, \circled{3} \big) \preccurlyeq \tfrac{1}{2}
\vec{\delta} \big( \circled{2}, \circled{4} \big)$,
which, by the invariance of the vectorial distance, amounts to
$$
\vec{\delta} \big(\psi_{j+1}(T^{2^{k-j-1}} x), F^{(2^{k-j-1})}(x) * \psi_{j+1}(x) \big)
\preccurlyeq \tfrac{1}{2}
\vec{\delta} \big( \psi_{j}(T^{2^{k-j}} x), F^{(2^{k-j})}(x) * \psi_{j}(x) \big) \, .
$$
Combining the whole chain of these inequalities we obtain:
$$
\vec{\delta} \big( \psi_{k}(T x), F(x) * \psi_{k}(x) \big)
\preccurlyeq \tfrac{1}{2^k}
\vec{\delta} \big( \psi_{0}(T^{2^k} x), F^{(2^k)}(x) * \psi_{0}(x) \big)
\, .
$$
Equivalently, denoting $\phi \coloneqq \psi_k$,
\begin{equation}\label{e.oba}
\vec{\delta} \big( \phi(T x), F(x) * \phi(x) \big)
\preccurlyeq 
\tfrac{1}{N} \vec{\delta} \big( o,  F^{(N)}(x) * o \big) 
\, .
\end{equation}
Take a continuous map $H \colon X \to \GL(d,\R)$ such that $H(x) * \phi(x) = o$ for every $x$ (e.g., $H \coloneqq \phi^{-1/2}$),
and let $G(x) \coloneqq H(T x) F(x) H(x)^{-1}$.
Then 
\begin{multline*}
\vec{\delta} \big( \phi(T x), F(x) * \phi(x) \big) = \vec{\delta} \big( o, G(x) * o \big)
\\ = 
\vec{\sigma} \big( G(x) * o \big)
\preccurlyeq 
\tfrac{1}{N} \vec{\sigma} \big( F^{(N)}(x) * o \big) \in \tfrac{1}{N} \Sigma_N(F) \subseteq U \cap \fa^+ \, .
\end{multline*}
Since $U \cap \fa^+$ is closed under majorization, we conclude that $\vec{\sigma} \big( G(x) * o \big) \in U$.
That is, $\Sigma_1(G) \subseteq U$, as we wanted to show.

In the case the cocycle admits a nontrivial dominated splitting, we take a preliminary conjugation to make the bundles of the finest dominated splitting orthogonal. Then the exact same procedure above leads to the desired conjugation, but we omit the verifications.
\end{proof}

As a corollary of \cref{p.approx_Mane}, we reobtain a result of Gourmelon \cite{Gourmelon}, which says that it is always possible to find an adapted Riemannian norm for which dominations are seen in the first iterate (i.e., $n_0=1$ in our definition). Indeed, the corresponding inner product at the point $x$ is $\phi(x)$, where $\phi$ is the map constructed in the proof of \cref{p.approx_Mane}.

Furthermore, the construction gives as extra property which is essential to certain applications \cite{B-K-RH}, namely: fixed a favored ergodic measure $\mu_0 \in \cE_T$, we can choose the adapted metric $\phi$ with respect to which the expansion rates in the first iterate are close to the Lyapunov exponents with respect to $\mu_0$, except on a set of small $\mu_0$ measure.\footnote{Lyapunov metrics used in Pesin theory \cite[p.~668]{KH.book} satisfy such a property in a set of full measure, but they are only measurable, and are not necessarily adapted to the finest dominated splitting.} More precisely, we can take $N$ large enough so that the RHS in \eqref{e.oba} is $L^1(\mu_0)$-close to the Lyapunov vector $\vec{\lambda}(F,\mu_0)$. On the other hand, the integral of the LHS majorizes the Lyapunov vector. It follows that the RHS is also $L^1(\mu_0)$-close to the Lyapunov vector.

The measures $\mu \in \cM_T$ for which the Lyapunov vector $\vec{\lambda}(F,\mu)$ is an extremal point of the symmetric Morse set $M(F)$ are called \emph{extremal measures} for the linear cocycle $(X,T,F)$.

As an example, consider the one-step cocycle generated by the pair of matrices $A_1 \coloneqq \left(\begin{smallmatrix} 1 & 1 \\ 0 & 1\end{smallmatrix}\right)$ and $A_2 \coloneqq \left(\begin{smallmatrix} 2 & 0 \\ 2 & 2 \end{smallmatrix}\right)$. The extremal measures of this cocycle are Sturmian: this can be deduced from a result of \cite{HMST}\footnote{Namely: for every $\alpha>0$, the one-step cocycle generated by $(A_0, \alpha A_1)$ has a unique Lyapunov-maximizing measure, which is Sturmian.}. Moreover, results of \cite{Morris-Sidorov} imply that the boundary of the symmetric Morse set is not differentiable, with a dense subset of corners, just like the fish seen in \cref{s.Birkhoff_any_dim}.
Again, we ask: are these phenomena typical?

\begin{ack}
I thank Ian D.~Morris for corrections and suggestions.
\end{ack}



%
%
%
%
%

\end{document}